\documentclass{article}[11pt]

\usepackage{amsmath}
\usepackage{amssymb}
\usepackage{amsthm}
\usepackage{enumerate}

\newtheorem{theorem}{Theorem}
\newtheorem{lemma}[theorem]{Lemma}
\newtheorem{prop}[theorem]{Proposition}
\newtheorem{cor}[theorem]{Corollary}

\newtheorem{definition}[theorem]{Definition}

\newtheorem{question}[theorem]{Question}

\newcommand{\dunion}{\dot{\cup}}

\newcommand{\hi}{H^{[1]}}

\allowdisplaybreaks
\date{\today}\begin{document}
\title{Chromatic Ramsey number of acyclic hypergraphs}

\author{Andr\'as Gy\'arf\'as\thanks{Advisor of the research conducted in the Research Opportunities Program of Budapest Semesters in Mathematics, Spring 2015.}\\
\small Alfr\'ed R\'enyi Institute of Mathematics\\[-0.8ex]
\small Hungarian Academy of Sciences\\[-0.8ex]
\small Budapest, P.O. Box 127 \\[-0.8ex]
\small Budapest, Hungary, H-1364 \\[-0.8ex]
\small \texttt{gyarfas@renyi.hu}\\[-0.8ex]\and
Alexander W.~N.~Riasanovsky\\[-0.8ex]
\small University of Pennsylvania\\[-0.8ex]
\small Department of Mathematics\\[-0.8ex]
\small Philadelphia, PA 19104, USA\\[-0.8ex]
\small \texttt{alexneal@math.upenn.edu}\and
Melissa U. Sherman-Bennett\\[-0.8ex]
\small Bard College at Simon's Rock\\[-0.8ex]
\small Department of Mathematics\\[-0.8ex]
\small Great Barrington, MA 01230, USA\\[-0.8ex]
\small \texttt{mshermanbennett12@simons-rock.edu}
}

\def\boxit#1{\medskip\vbox{\hrule \hbox{\vrule\kern15pt\vbox{\kern5pt
\vbox{\advance\hsize -30pt #1\par} \kern5pt}\kern15pt\vrule}\hrule} \medskip}

\maketitle

\eject
\begin{abstract}

Suppose that $T$ is an acyclic $r$-uniform hypergraph, with $r\ge 2$. We define the ($t$-color) chromatic Ramsey number $\chi(T,t)$ as the smallest $m$ with the following property: if the edges of any $m$-chromatic $r$-uniform hypergraph are colored with $t$ colors in any manner, there is a monochromatic copy of $T$. We observe that $\chi(T,t)$ is well defined and $$\left\lceil {R^r(T,t)-1\over r-1}\right \rceil +1 \le \chi(T,t)\le |E(T)|^t+1$$ where $R^r(T,t)$ is the $t$-color Ramsey number of $H$. We give linear upper bounds for $\chi(T,t)$ when T is a matching or star, proving that for $r\ge 2, k\ge 1, t\ge 1$, $\chi(M_k^r,t)\le (t-1)(k-1)+2k$ and $\chi(S_k^r,t)\le t(k-1)+2$ where $M_k^r$ and $S_k^r$ are, respectively, the $r$-uniform matching and star with $k$ edges.

The general bounds are improved for $3$-uniform hypergraphs. We prove that $\chi(M_k^3,2)=2k$, extending a special case of Alon-Frankl-Lov\'asz' theorem. We also prove that $\chi(S_2^3,t)\le t+1$, which is sharp for $t=2,3$. This is a corollary of a more general result. We define $H^{[1]}$ as the 1-intersection graph of $H$, whose vertices represent hyperedges and whose edges represent intersections of hyperedges in exactly one vertex. We prove that $\chi(H)\le \chi(H^{[1]})$ for any $3$-uniform hypergraph $H$ (assuming $\chi(H^{[1]})\ge 2$). The proof uses the list coloring version of Brooks' theorem.

\end{abstract}

\section{Introduction}

A hypergraph $H=(V,E)$ is a set $V$ of {\em vertices} together with a nonempty set $E$ of subsets of $V$, which are called {\em edges}.  In this paper, we will assume that for each $e\in E$, $|e|\geq 2$. If $|e|=r$ for each $e\in E$, then $H$ is {\em $r$-uniform}; a $2$-uniform $H$ is a graph.
 A hypergraph $H$ is {\em acyclic} if $H$ contains no cycles (including $2$-cycles which are two edges intersecting in at least two vertices). If $H$ is a connected acyclic hypergraph, we say that $H$ is a {\em tree}.  In particular, a {\em star} is a tree in which one vertex is common to every edge. A {\em matching} is a hypergraph consisting of pairwise disjoint edges, with every vertex belonging to some edge. We denote by $S_k^r$ and $M_k^r$  the $r$-uniform $k$-edge star and matching, respectively.

For a positive integer $k$, a function $c:V\to \{1,\dots,k\}$ is called a $k$-coloring of $H$. A coloring $c$ is {\em proper} if no edge of $H$ is monochromatic under $c$. The chromatic number of $H$, denoted $\chi(H)$, is the least $m\geq 1$ for which there exists a proper $m$-coloring of $H$ and in this case, we say that $H$ is $m$-chromatic. Given $H=(V,E)$, a partition $\{E_1,\dots,E_t\}$ of $E$ into $t$ parts is called a $t$-edge-coloring of $H$. For $r$-uniform hypergraphs $H_1,H_2,\dots,H_t$, the ($t$-color) Ramsey number $R^r(H_1,H_2,\dots,H_t)$ is the smallest integer $n$ for which the following is true: under any $t$-edge-coloring of the complete $r$-uniform hypergraph $K_n^r$, there is a monochromatic copy of $H_i$ in color $i$ for some $i\in \{1,2,\dots,t\}$.
When all $H_i=H$ we use the notation $R^r(H,t)$.

Bialostocki and the senior author of this paper extended two well-known results in Ramsey theory from complete host graph $K_n$ to arbitrary $n$-chromatic graphs \cite{BGY}. One extends a remark of Erd\H os and Rado stating that in any $2$-coloring of
the edges of a complete graph $K_n$ there is a monochromatic spanning tree. The other is the extension of the result of Cockayne and Lorimer \cite{CL} about the $t$-color Ramsey number of matchings. In \cite{G}, an acyclic graph $H$ is defined as {\em $t$-good} if every $t$-edge coloring  of any $R^2(H,t)$-chromatic graph contains a monochromatic copy of $H$. Matchings are $t$-good for every $t$ \cite{BGY} and  in \cite{G} it was proved that stars are $t$-good, as well as the path $P_4$ (except possibly for $t=3$). Additionally, $P_5,P_6,P_7$ are $2$-good. In fact, as remarked in \cite{BGY}, there is no known example of an acyclic $H$ that is not $t$-good.


In this paper, we explore a similar extension of Ramsey theory for hypergraphs, motivating the following definition.
\begin{definition}
Suppose that $T$ is an acyclic $r$-uniform hypergraph. Let $\chi(T,t)$ be the smallest $m$ with the following property: under any $t$-edge-coloring of any $m$-chromatic $r$-uniform hypergraph, there is a monochromatic copy of $T$.
\end{definition}

We call $\chi(T,t)$ the {\em chromatic Ramsey number of $T$}.
It follows from the existence of hypergraphs of large girth and chromatic number that the chromatic Ramsey number can be defined only for acyclic  hypergraphs.

\section{New results}

First we note that $\chi(T,t)$ is well-defined for any $r$-uniform tree $T$ and any $t\geq 1$, as an upper bound comes easily from the following result.
\vspace{12pt}

\noindent {\bf Lemma A.} (\cite{GYL},\cite{Loh})\label{k+1 bound}	If $H$ is $r$-uniform with $\chi(H)\geq k+1$, then $H$ contains a copy of any $r$-uniform tree on $k$ edges.
\begin{theorem} For any $r$-uniform tree $T$, $\chi(T,t)\le|E(T)|^t+1$.
\end{theorem}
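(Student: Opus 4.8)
The plan is to combine a standard product-coloring bound on the chromatic number of an edge-colored hypergraph with Lemma A. Write $k=|E(T)|$, so it suffices to show that every $t$-edge-coloring of every $r$-uniform hypergraph $H$ with $\chi(H)\ge k^t+1$ admits a monochromatic copy of $T$. I would fix such an $H$ together with an arbitrary $t$-edge-coloring, writing its color classes as $E_1,\dots,E_t$ and setting $H_i=(V(H),E_i)$.

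The key step is the inequality $\chi(H)\le\prod_{i=1}^t\chi(H_i)$. To establish it, I would take a proper coloring $c_i\colon V(H)\to\{1,\dots,\chi(H_i)\}$ of each $H_i$ and form the product coloring $c=(c_1,\dots,c_t)$. Any edge $e\in E(H)$ lies in exactly one class $E_i$; since $c_i$ is proper on $H_i$, the edge $e$ is nonmonochromatic under $c_i$ and hence under the refinement $c$. Thus $c$ is a proper coloring of $H$ using at most $\prod_{i=1}^t\chi(H_i)$ colors, which yields the inequality.

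To finish, suppose for contradiction that $\chi(H_i)\le k$ for every $i$. Then $\prod_{i=1}^t\chi(H_i)\le k^t<k^t+1\le\chi(H)$, contradicting the displayed inequality. Hence at least one color class satisfies $\chi(H_i)\ge k+1$, and Lemma A then guarantees that $H_i$ contains a copy of every $r$-uniform tree on $k$ edges, in particular $T$. That copy lies entirely in color $i$, so it is the desired monochromatic copy of $T$, and we conclude $\chi(T,t)\le k^t+1$.

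I do not anticipate a serious obstacle: the only nonroutine ingredient is the product-coloring inequality, and even that is a short refinement argument. The minor points to check are that an edgeless color class has chromatic number $1$ (consistent with the bound) and that the standing hypothesis $|e|\ge 2$ for every edge is what makes ``monochromatic'' and ``proper'' meaningful; both are already in force in the paper's setup.
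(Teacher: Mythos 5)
Your proposal is correct and follows essentially the same route as the paper: both rest on the product-coloring inequality $\chi(H)\le\prod_{i=1}^t\chi\bigl((V,E_i)\bigr)$ to force some color class with chromatic number at least $|E(T)|+1$, and then invoke Lemma A. The only difference is that you spell out the proof of the product inequality, which the paper leaves implicit.
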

\begin{proof}
Fix $t\geq 1$. Let $T$ be an $r$-uniform tree with $k$ edges and let $H=(V,E)$ be a hypergraph with $\chi(H)\geq k^t+1$.  Let $E=E_1\dunion\cdots\dunion E_t$ be a $t$-coloring of its edges.

 Then $\chi((V,E_1))\cdots\chi((V,E_t))\geq k^t+1$ and without loss of generality, $\chi((V,E_1))\geq k+1$.  Then by Lemma A, $(V,E_1)$ contains a copy of $T$.
\end{proof}
Since any $r$-uniform acyclic hypergraph $T$ may be found in some $r$-uniform tree $T^\prime$ and $\chi(T,t)\leq \chi(T^\prime,t)$, $\chi(T,t)$ is in fact well-defined for any $r$-uniform acyclic hypergraph and for any $t\geq 1$.  Observe the following natural lower bound of $\chi(T,t)$.  Let $L(T,t,r):=\left\lceil {R^r(T,t)-1\over r-1}\right \rceil +1$.
\begin{prop}\label{lowerb}
$L(T,t,r)\leq \chi(T,t)$
\end{prop}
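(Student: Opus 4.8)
The plan is to exhibit a single $r$-uniform hypergraph that is $(L(T,t,r)-1)$-chromatic and yet carries a $t$-edge-coloring with no monochromatic copy of $T$; any such hypergraph certifies that the threshold $\chi(T,t)$ cannot be as small as $L(T,t,r)-1$. The natural candidate is the complete hypergraph $K_{R-1}^r$, where $R:=R^r(T,t)$, because its chromatic number will turn out to be exactly $L(T,t,r)-1$ and the Ramsey extremal coloring lives on it.

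First I would record the two facts that drive the argument. By the minimality built into the definition of $R^r(T,t)$, the complete $r$-uniform hypergraph on $R-1$ vertices admits some $t$-edge-coloring $\phi$ with no monochromatic $T$; fix it. Second, I would compute $\chi(K_n^r)$: a vertex coloring is proper precisely when no color class contains an $r$-set, i.e. when every class has size at most $r-1$, so $\chi(K_n^r)=\lceil n/(r-1)\rceil$. Taking $n=R-1$ gives $\chi(K_{R-1}^r)=\lceil (R-1)/(r-1)\rceil=L(T,t,r)-1$, which is exactly why $R-1$ is the right number of vertices to use.

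At this stage $K_{R-1}^r$ together with $\phi$ is an $(L(T,t,r)-1)$-chromatic hypergraph on which some $t$-edge-coloring avoids monochromatic $T$. The remaining step is to convert this one example into the inequality $\chi(T,t)\ge L(T,t,r)$, and here one must be careful: the definition of $\chi(T,t)$ quantifies over hypergraphs whose chromatic number equals a prescribed $m$ exactly, so a counterexample at the single level $L(T,t,r)-1$ is not literally enough. To bridge this I would use two easy observations, namely that the ``no monochromatic $T$'' property is inherited by subhypergraphs, and that deleting one edge lowers the chromatic number by at most one (given a proper coloring of $H-e$, recoloring a single vertex of $e$ with a fresh color makes it proper for $H$, so $\chi(H)\le\chi(H-e)+1$). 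Removing edges one at a time from $K_{R-1}^r$ therefore drives the chromatic number from $L(T,t,r)-1$ down to $1$ in unit steps, hitting every intermediate value; at each such value the restriction of $\phi$ still has no monochromatic $T$. Hence for every $m\le L(T,t,r)-1$ there is an $m$-chromatic $r$-uniform hypergraph failing the defining property, so no such $m$ can serve as the threshold and $\chi(T,t)\ge L(T,t,r)$.

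I expect the genuinely substantive point to be this last bookkeeping rather than any computation: being explicit that ``$m$-chromatic'' means $\chi=m$ and therefore producing counterexamples at \emph{every} level up to $L(T,t,r)-1$, not only at the top. The edge-deletion monotonicity settles it cleanly, but it is the one step I would state precisely rather than leave implicit.
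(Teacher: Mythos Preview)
Your proof is correct and takes essentially the same approach as the paper: exhibit the Ramsey-extremal $t$-edge-coloring on $K_{R-1}^r$ with $R=R^r(T,t)$ and compute $\chi(K_{R-1}^r)=\lceil (R-1)/(r-1)\rceil=L(T,t,r)-1$. The paper's two-line proof leaves the downward monotonicity of the defining property implicit, whereas you spell it out via edge deletion; this extra care is sound but not a different method.
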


\begin{proof}Let $N:=R^r(T,t)-1$. By the definition of the Ramsey number, there is a $t$-coloring of the edges of $K_N^r$ without a monochromatic $T$. Since $\chi(K_N^r)=\lceil {N\over r-1}\rceil$, the proposition follows.
\end{proof}

The notion of $t$-good graphs can be naturally extended to hypergraphs using Proposition \ref{lowerb}.
An acyclic $r$-uniform hypergraph $T$ is called {\em $t$-good} if every $t$-edge coloring of any $L(T,t,r)$-chromatic $r$-uniform hypergraph contains a monochromatic copy of $T$. In other words, $T$ is $t$-good if $L(T,t,r)=\chi(T,t)$. Note that for $r=2$, this gives the definition of good graphs. Although it is unlikely that all acyclic hypergraphs are $t$-good, we have no counterexamples.

For special families of $r$-uniform acyclic hypergraphs, we found linear upper bounds for $\chi(T,t)$, improving upon the general exponential upper bound above.  Surprisingly, most of the bounds attained do not depend on $r$.
\subsection{Matchings}
Indispensable in this section is the following well-known result of Alon, Frankl, and Lov\'asz (originally conjectured by Erd\H os).

\vspace{12pt}

\noindent {\bf Theorem B.} (\cite{AFL}) For $r\ge 2,k\ge 1,t\ge 1$, $$R^r(M_k^r,t)=(t-1)(k-1)+kr.$$
Note that special cases of Theorem B include $r=2$ \cite{CL}, $k=2$ \cite{LO}, $t=2$  \cite{AF}, \cite{GY}.

We obtain the following linear upper bound for matchings using Theorem B.
\begin{theorem}\label{genbound} For $r\ge 2,k\ge 1,t\ge 1$, $\chi(M_k^r,t)\le (t-1)(k-1)+2k$. Equality holds for $r=2$.
\end{theorem}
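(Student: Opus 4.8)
The plan is to prove the contrapositive in the form of a chromatic bound: if $H=(V,E)$ is $r$-uniform and $E=E_1\dunion\cdots\dunion E_t$ is a $t$-edge-coloring in which no color class $H_i:=(V,E_i)$ contains $M_k^r$ --- equivalently $\nu(H_i)\le k-1$ for every $i$, where $\nu$ denotes the matching number --- then $\chi(H)\le (t-1)(k-1)+2k-1$. Since $\chi(H)\ge (t-1)(k-1)+2k$ would then force $\nu(H_i)\ge k$ for some $i$, this yields the theorem.

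First I would settle the one-color base case, i.e.\ that $\nu(H)\le k-1$ implies $\chi(H)\le 2k-1$; this is the $t=1$ instance and is exactly where the bound $2k$ (rather than $R^r(M_k^r,1)=kr$) comes from. Fix a maximum matching $f_1,\dots,f_s$ of $H$ with $s\le k-1$, let $U=\bigcup_i f_i$, and note that $Z:=V\setminus U$ is independent, since any edge inside $Z$ would be disjoint from the matching. I color $Z$ with a single color $0$ and split each $f_i$ into two nonempty classes receiving fresh colors $2i-1$ and $2i$, using $2s+1\le 2k-1$ colors in all. This is proper: every edge $e$ meets $U$; if $e$ meets two of the $f_i$ it receives two of the paired colors, while if $e$ meets exactly one $f_i$ then $e\neq f_i$ forces $e$ to reach into $Z$ (color $0$) or into another $f_j$, so $e$ is nonmonochromatic. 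This argument depends only on $\nu(H)$ and not on $r$, which is precisely what makes the final bound independent of $r$.

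Next I would induct on $t$, peeling off one color at an additive cost of $k-1$. The idea is to pick a color $i$ admitting a vertex cover $S$ with $|S|\le k-1$ (a set meeting every edge of $E_i$); deleting $S$ removes color $i$, so $H-S$ carries only $t-1$ colors, each still with matching number at most $k-1$. Combining $\chi(H)\le |S|+\chi(H-S)$ with the inductive hypothesis $\chi(H-S)\le (t-2)(k-1)+2k-1$ gives exactly $\chi(H)\le (t-1)(k-1)+2k-1$, the base case above supplying the trailing $2k-1$.

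The hard part is guaranteeing the peel costs only $k-1$: a class with $\nu(H_i)\le k-1$ can have vertex cover number as large as $r(k-1)$ (a ``clique-like'' class), so a cheap cover need not exist for each color. This is the crux, and it is where Theorem B is meant to enter: one must show that at each stage one can either peel a color at cost $k-1$ or else certify that the surviving configuration already has small chromatic number --- morally, at most one color can play the role of the $(2k-1)$-vertex clique in the Cockayne--Lorimer extremal coloring, and that one is absorbed by the base case. I would measure ``clique-like'' through the gap $\tau(H_i)-\nu(H_i)$ and dispose of such a class by the two-coloring device of the base case rather than by deletion, the delicate point being to avoid spending colors twice (a naive combination of the base construction across all colors loses a factor of about two). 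Finally, equality for $r=2$ is immediate from Proposition~\ref{lowerb}, since there $L(M_k^2,t,2)=R^2(M_k^2,t)=(t-1)(k-1)+2k$ by Theorem B, so the lower and upper bounds coincide.
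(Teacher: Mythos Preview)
Your base case ($t=1$) is correct and nicely done, and your derivation of equality for $r=2$ from Proposition~\ref{lowerb} is fine. But the inductive step is not a proof: you need, at each stage, a color class $H_i$ with a vertex cover of size at most $k-1$, and you explicitly acknowledge that $\nu(H_i)\le k-1$ only yields $\tau(H_i)\le r(k-1)$. Your proposed remedy---absorb a ``clique-like'' class by the two-coloring device of the base case---would cost $2(k-1)$ colors for that class rather than $k-1$, and you do not explain how to avoid paying this overhead more than once across the $t$ colors, nor how Theorem~B would certify that at most one class is clique-like. As stated, the argument proves only $\chi(H)\le t(k-1)+2k-1$ (peel each color at cost $2(k-1)$ via its maximum matching, leaving one color for the complement), which is off by $k-1$.

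The paper sidesteps this difficulty entirely. It does not try to bound $\chi(H)$ from above under the no-matching hypothesis; instead it starts from $\chi(H)\ge p:=(t-1)(k-1)+2k$, runs a greedy vertex-coloring to realize $p$ color classes, and observes that for every pair $i<j$ there is an edge $e_{ij}$ whose vertices lie in class $i$ except for one vertex in class $j$. Transferring the color of $e_{ij}$ to the edge $\{i,j\}$ gives a $t$-edge-coloring of $K_p^2$; Cockayne--Lorimer (the $r=2$ case of Theorem~B) then supplies a monochromatic $M_k^2$, and the corresponding $e_{ij}$'s are pairwise disjoint in $H$ because they live in pairwise disjoint color classes. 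The whole point is that the greedy coloring reduces the problem to the \emph{graph} Ramsey number $R^2(M_k^2,t)=(t-1)(k-1)+2k$, where the extremal structure is already understood; your approach never makes this reduction and is left fighting the hypergraph cover-versus-matching gap directly.
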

\begin{proof}
Let $H=(V,E)$ be an $r$-uniform hypergraph with  $\chi(H)=p$ where $p:=R^r(M_k^2,t)$.  By Theorem B, $p=(t-1)(k-1)+2k$. Consider any $t$-edge coloring $\{E_1,\dots,E_t\}$ of $H$ and any proper coloring $c$ of $H$ obtained by the greedy algorithm (under any ordering of its vertices). Clearly $c$ uses at least $p$ colors and for any $1\le i<j\le p$ there is an edge $e_{ij}$ in $H$ whose vertices are colored with color $i$ apart from a single vertex which is colored with $j$. Let $\{F_1,\dots,F_t\}$ be a $t$-edge-coloring of $K_p^2$ defined so that $F_s:=\{\{i,j\},\;1\leq i<j\leq p,\;e_{ij}\in E_s\}$ for each $s,\;1\leq s\leq t$.  From the definition of $p$, Theorem B (in fact the Cockayne-Lorimer Theorem suffices) implies that there is a monochromatic $M_k^2$ in $K_p$. Observe that
$$\{e_{ij}:\{i,j\}\in M_k^2\}$$
is a set of $k$ pairwise disjoint edges in $H$ in the same partition class of $\{E_1,\dots,E_t\}$. This completes the proof that $\chi(M_k^r,t)\le (t-1)(k-1)+2k$.  The lower bound $R^2(M_k^2,t)\leq\chi(M_k^2,t)$ implies equality in the $r=2$ case.
\end{proof}
Next we tighten this bound, provided $r\geq 3$ and $t=2$.
\begin{theorem}\label{main1}
For $r\ge 3$ and $k\geq 1$, $\chi(M_k^r,2)\leq 2k$.
\end{theorem}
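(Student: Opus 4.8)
The plan is to prove the contrapositive: if $H$ is $r$-uniform with $r\ge 3$ and its edges are $2$-coloured so that neither colour class contains $k$ pairwise disjoint edges, i.e. $\nu_R\le k-1$ and $\nu_B\le k-1$ (where $\nu_R,\nu_B$ are the red and blue matching numbers), then $\chi(H)\le 2k-1$. Equivalently, I want to establish $\chi(H)\le 2\max(\nu_R,\nu_B)+1$, which by the lower bound of Proposition~\ref{lowerb} together with Theorem B (giving $R^3(M_k^3,2)=4k-1$, hence $L(M_k^3,2,3)=2k$) would also yield the claimed equality $\chi(M_k^3,2)=2k$. The overall approach mirrors the greedy reduction used in Theorem~\ref{genbound}, but I would extract two structural bonuses that are available only when $r\ge 3$.

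Concretely, I would fix a vertex order and colour greedily. For every vertex $v$ of colour $j$ and every $i<j$ there is a witness edge through $v$ whose other $r-1$ vertices all carry colour $i$; each witness is red or blue. The new features for $r\ge 3$ are: (i) every witness has $r-1\ge 2$ vertices in its ``bulk'' colour class, and (ii) every colour class except the top one has at least $r-1\ge 2$ vertices. Feature (ii) forces the adversary's monochromatic intersecting structures to be ``fat,'' and this is exactly what I would exploit to beat the graph threshold.

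The key gain is \emph{apex sharing}. In the $r=2$ reduction, two disjoint hyperedges require disjoint colour pairs, so one is forced to find a \emph{monochromatic perfect matching} on the colour set, which costs the Cockayne--Lorimer value $3k-1$. For $r\ge 3$ I can instead let two same-coloured witnesses with distinct bulk colours $i\neq i'$ share a common apex colour $j$: since $|V_j|\ge 2$, they can be placed on different vertices of $V_j$ and remain vertex-disjoint. Heuristically this means $k$ monochromatic disjoint hyperedges need only $k$ distinct bulk colours plus roughly $k/2$ apex colours, comfortably within the budget of $2k$ colours, which is what should lower the threshold from $3k-1$ to $2k$. I would try to package this as a monochromatic ``semi-matching'' in an auxiliary $2$-edge-coloured complete (or bipartite) graph on the colours, where bulk vertices are saturated and apex multiplicities are at most $2$, and then lift it back to a monochromatic $M_k^r$.

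The hard part, which I expect to be the main obstacle, is that a \emph{fixed} bulk/apex split does not work: a small check shows that a colour can be joined monochromatically (say all-red) to every colour below and above it, so it can participate only in red edges, and then neither colour admits the required saturating configuration. Thus each colour must be allowed to play either role, and the whole interplay must be controlled at once. A further subtlety is that greedy colouring can use far more than $\chi(H)$ classes (one can build small examples where the greedy class count is $2k$ yet $\chi=2$), so the argument must genuinely use the \emph{true} chromatic number rather than the number of greedy classes --- otherwise the adversary escapes by making the fat classes bichromatic. My proposed remedy is to run an induction on $\max(\nu_R,\nu_B)$, seeking at each step a bounded number (ideally two) of independent sets whose deletion decreases $\max(\nu_R,\nu_B)$ by one, so that $\chi$ drops by at most two per unit of $\max$; this is anchored by the extremal identity behind the lower bound (red $=$ all triples meeting a fixed $(k-1)$-set, blue $=$ the rest, with $\chi=2k-1$). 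Making this deletion step precise --- guaranteeing that deleting few independent sets actually lowers a monochromatic matching number without wasting colours --- is the crux I would need to resolve.
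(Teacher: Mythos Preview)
Your proposal is explicitly incomplete: you yourself flag the ``crux'' --- finding two independent sets whose deletion decreases $\max(\nu_R,\nu_B)$ --- as unresolved. That is a genuine gap, and the direction is harder than necessary. Reducing a monochromatic matching number by deleting a bounded number of independent sets is awkward: you would need the deleted sets to meet \emph{every} maximum red (or blue) matching, and nothing in your setup produces such sets.

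The greedy/apex-sharing discussion is a detour you correctly abandon. Your final inductive instinct is right, but it tracks the wrong invariant. The paper runs a direct induction on $k$, not on $\max(\nu_R,\nu_B)$, and never examines matching numbers. The missing idea is this: pick \emph{any} red edge $e$ and \emph{any} blue edge $f$, and set $A=e\cup f$. If $H[A]$ is $2$-colourable, then $\chi(H-A)\ge \chi(H)-2\ge 2(k-1)$, so by induction $H-A$ contains a monochromatic $M_{k-1}^r$; adjoining whichever of $e,f$ matches its colour yields the desired $M_k^r$. All the work is in arranging that $H[A]$ is $2$-colourable. If $|e\cap f|\ge 2$ then $|A|\le 2r-2$, and any $r$-uniform hypergraph on at most $2r-2$ vertices is $2$-colourable. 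If $|e\cap f|\le 1$ and $H[A]$ is not $2$-colourable, a short case check (this is where $r\ge 3$ is used) locates a third edge $g\subset A$ of one colour meeting the opposite-coloured edge in at least two vertices, and one replaces $e$ or $f$ by $g$ to land in the previous case.

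So the structural bonus of $r\ge 3$ is not ``fat colour classes'' from a greedy colouring; it is simply that one can always find a differently-coloured pair of edges whose union has size at most $2r-2$, hence induces a $2$-colourable piece. The object to delete at each step is just one red edge together with one blue edge.
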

\begin{proof}
We fix $r\geq 3$ and proceed by induction on $k$.  Suppose $k=1$ and let $H$ be some $r$-uniform hypergraph with $\chi(H)\geq 2$.  Then any $2$-edge-coloring of $H$ contains a single monochromatic edge since $H$ has at least one edge.  Now suppose the theorem is true for $k-1\geq 1$ and let $H=(V,E)$ be $r$-uniform with $\chi(H)\geq 2k$.  Without loss of generality, $H$ is connected.  Fix some $2$-edge-coloring $\{E_1,E_2\}$ of $H$, calling the edges of $E_1$ ``red'' and the edges of $E_2$ ``blue''.  If $E_1$ or $E_2$ is empty, then Theorem \ref{genbound} with $t=1$ implies the desired bound.  \\
\\So we may assume otherwise, and there exist edges $e,f\in E$ with $e$ red and $f$ blue.  Let $s:=|e\cap f|$ and $A:=e\cup f$.  If $H[A]$ is $2$-colorable, then $\chi(H-A)\geq\chi(H)-2=2(k-1)$ so by induction we find a monochromatic $M_{k-1}^r$ matching in $H-A$.  Without loss of generality, $M_{k-1}^r$ is red and $M_{k-1}^r+e$ is a red $M_k^r$ in $H$. 

If $s>1$, then $|A|=2r-s\leq 2r-2$ thus $H[A]$ is certainly $2$-colorable and the induction works.  If $s=1$  and $H[A]$ is not $2$-colorable then $H[A]$ is $K_{2r-1}^r$.  Writing $e=\{w,u_1,\dots,u_{r-1}\}$ and $f=\{w,v_1,\dots,v_{r-1}\}$,  the edge $g=\{w\}\cup\{u_1,u_3,\dots\}\cup\{v_2,v_4,\dots\}\in E(H)$.  Without loss of generality, $g$ is red and $|g\cap f|=1+\lfloor (r-1)/2\rfloor\geq2$ since $r\geq 3$.  So the previous case applies to the red edge $g$ and blue edge $f$. Finally, if $s=0$ and $H[A]$ is not $2$-colorable there must be $g\in H[A]$ that intersects both $e$ and $f$. Then either $e,g$ or $f,g$ is a pair of edges of different color that intersect, and a previous case can be applied again.
\end{proof}

\begin{cor} \label{aflcor}  $\chi(M_k^3,2)=2k$.
\end{cor}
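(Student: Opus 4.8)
The plan is to sandwich $\chi(M_k^3,2)$ between a matching upper and lower bound, each of which is already available. The upper bound $\chi(M_k^3,2)\le 2k$ is exactly Theorem \ref{main1} specialized to $r=3$, so no new argument is needed on that side. The work, such as it is, lies entirely in producing a matching lower bound, and for that I would lean on Proposition \ref{lowerb} together with the Alon--Frankl--Lov\'asz formula (Theorem B).

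Concretely, I would first compute the relevant Ramsey number. Applying Theorem B with $r=3$ and $t=2$ gives
$$R^3(M_k^3,2)=(2-1)(k-1)+3k=4k-1.$$
Feeding this into the definition of the general lower bound $L(T,t,r)=\left\lceil (R^r(T,t)-1)/(r-1)\right\rceil+1$, I obtain
$$L(M_k^3,2,3)=\left\lceil \frac{(4k-1)-1}{3-1}\right\rceil+1=\left\lceil\frac{4k-2}{2}\right\rceil+1=(2k-1)+1=2k.$$
Proposition \ref{lowerb} then yields $2k=L(M_k^3,2,3)\le\chi(M_k^3,2)$. Combining this with the upper bound $\chi(M_k^3,2)\le 2k$ from Theorem \ref{main1} forces equality, which is the assertion of the corollary. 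Equivalently, this shows that the $3$-uniform $k$-edge matching is $2$-good in the sense defined above.

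I do not expect a genuine obstacle here, since the corollary is really a matter of observing that the general upper and lower bounds coincide for $r=3,\ t=2$. The only point that merits a second look is that the ceiling in the definition of $L$ collapses exactly: the numerator $(R^3(M_k^3,2)-1)=4k-2$ is even, so $\lceil (4k-2)/2\rceil=2k-1$ with no slack, and it is precisely this exact cancellation that makes the lower bound meet the upper bound of Theorem \ref{main1} on the nose. I would therefore simply verify that arithmetic carefully, and then state that the two bounds together give $\chi(M_k^3,2)=2k$.
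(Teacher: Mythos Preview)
Your proposal is correct and follows essentially the same approach as the paper: both invoke Theorem \ref{main1} for the upper bound and combine Proposition \ref{lowerb} with Theorem B to compute $L(M_k^3,2,3)=2k$ for the lower bound. The only cosmetic difference is that you explicitly evaluate $R^3(M_k^3,2)=4k-1$ before substituting, whereas the paper writes the numerator directly as $k-1+3k-1$.
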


\begin{proof}
The upper bound is given by Theorem \ref{main1}. The lower bound comes from Proposition \ref{lowerb} and Theorem B:
$$L(M_k^3,2,3)=\left\lceil {k-1+3k-1\over 2}\right\rceil+1=2k\le \chi(M_k^3,2).$$
 \end{proof}
\begin{cor} \label{2matching} For $r\ge 3$, $\chi(M_2^r,2)=4$.
\end{cor}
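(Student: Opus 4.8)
The plan is to combine the upper bound of Theorem~\ref{main1} with the lower bound of Proposition~\ref{lowerb}, mirroring the argument already used for Corollary~\ref{aflcor}. Since both ingredients are in place, the corollary requires only a short computation. For the upper bound I would simply specialize Theorem~\ref{main1} to $k=2$: for every $r\ge 3$ this gives $\chi(M_2^r,2)\le 4$ directly, with no further work.

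For the matching lower bound I would invoke Proposition~\ref{lowerb}, so that it suffices to evaluate $L(M_2^r,2,r)$. First compute the relevant Ramsey number from Theorem~B with $t=k=2$, namely $R^r(M_2^r,2)=(t-1)(k-1)+kr=1+2r$. Substituting into the definition $L(T,t,r)=\lceil (R^r(T,t)-1)/(r-1)\rceil+1$ yields $L(M_2^r,2,r)=\lceil 2r/(r-1)\rceil+1$.

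The only step needing any care is the evaluation of this ceiling, and I would handle it by writing $2r/(r-1)=2+2/(r-1)$. For every $r\ge 3$ we have $0<2/(r-1)\le 1$, whence $2<2r/(r-1)\le 3$ and therefore $\lceil 2r/(r-1)\rceil=3$. This gives $L(M_2^r,2,r)=4\le \chi(M_2^r,2)$, and together with the upper bound we conclude $\chi(M_2^r,2)=4$. I do not anticipate a genuine obstacle here; the one subtlety worth flagging is that the hypothesis $r\ge 3$ is exactly what forces the ceiling down to $3$. Indeed at $r=2$ one instead gets $\lceil 4/1\rceil=4$, so that $L(M_2^2,2,2)=5$, consistent with the value $\chi(M_2^2,2)=5$ from Theorem~\ref{genbound}; thus the stated answer $4$ genuinely depends on the restriction $r\ge 3$, which is also precisely the range in which the upper bound Theorem~\ref{main1} applies.
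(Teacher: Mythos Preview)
Your proposal is correct and follows exactly the paper's approach: the upper bound from Theorem~\ref{main1} with $k=2$, and the lower bound from Proposition~\ref{lowerb} together with Theorem~B. Your numerator $2r$ in $L(M_2^r,2,r)=\lceil 2r/(r-1)\rceil+1$ is the correct application of the definition (the paper's displayed proof writes $2r-1$, apparently a slip, though its subsequent remark uses $2r$); either way the ceiling is $3$ for all $r\ge 3$, and your justification of that step is cleaner than the paper's.
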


\begin{proof}As in Corollary \ref{aflcor}, the upper bound comes from Theorem \ref{main1} and the lower bound from
$$L(M_2^r,2,r)=\left\lceil{2r-1\over r-1}\right\rceil+1=4.$$
\end{proof}
It is worth noting that Corollary \ref{2matching} does not extend Theorem B to the chromatic Ramsey number setting for $r\ge4$. Indeed, for $r=4$, the lower bound $\lceil{2r\over r-1}\rceil+1$  of Proposition \ref{lowerb} is $4$ and the bound  $\lceil{1+2r\over r-1}\rceil$ derived from Theorem B is $3$.



\subsection{Stars}

\begin{theorem}\label{star}
 For $r\ge 2, k\ge 1, t\ge 1$, $\chi(S_k^r,t)\le t(k-1)+2$.
\end{theorem}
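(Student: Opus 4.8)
The plan is to mirror the greedy-coloring strategy used in the proof of Theorem~\ref{genbound}, but to read a monochromatic star directly off the blocking edges of a single high-colored vertex, rather than passing through a graph Ramsey number. Set $p:=t(k-1)+2$ and let $H=(V,E)$ be an arbitrary $r$-uniform hypergraph with $\chi(H)\ge p$, equipped with any $t$-edge-coloring $\{E_1,\dots,E_t\}$. First I would properly color the vertices of $H$ by the greedy algorithm under some fixed ordering; since every proper coloring uses at least $\chi(H)\ge p$ colors, the greedy coloring $c$ uses at least $p$ colors, and so there is a vertex $w$ receiving a color $q\ge p$.

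Next I would exploit the defining property of the greedy algorithm at $w$: for each color $i$ with $1\le i\le q-1$, color $i$ was unavailable for $w$, which means there is an edge $e_i\in E$ containing $w$ all of whose remaining $r-1$ vertices were already colored $i$ by $c$. This yields $q-1\ge p-1=t(k-1)+1$ edges $e_1,\dots,e_{q-1}$, all through $w$. The conceptual heart of the argument is the observation that these edges automatically form a star: if $i\ne j$ and some vertex $v\ne w$ were in $e_i\cap e_j$, then $v$ would carry both color $i$ and color $j$, which is impossible, so $e_i\cap e_j=\{w\}$. Hence $\{e_1,\dots,e_{q-1}\}$ is a copy of $S_{q-1}^r$ centered at $w$, with at least $t(k-1)+1$ edges.

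Finally I would apply pigeonhole to the $t$-edge-coloring restricted to these $t(k-1)+1$ edges: some color class contains at least $\lceil (t(k-1)+1)/t\rceil=k$ of them, and since all of them pairwise meet only in $w$, these $k$ edges constitute a monochromatic copy of $S_k^r$. The step requiring the most care is the disjointness observation; once one notices that blocking edges associated to distinct colors can share no petal vertex, the greedy coloring supplies a large star essentially for free, and no analogue of the detour through $K_p^2$ (needed for matchings) is required. I would also sanity-check the degenerate cases: for $t=1$ the argument gives at least $k$ same-colored edges forming the star directly (consistent with Lemma~A applied at $\chi(H)\ge k+1$), and for $k=1$ the hypothesis $\chi(H)\ge 2$ merely guarantees an edge, i.e.\ a copy of $S_1^r$.
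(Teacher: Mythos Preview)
Your proof is correct and follows essentially the same route as the paper: first locate a copy of $S_{p-1}^r$ in $H$ (the paper invokes Lemma~A as a black box, while you explicitly carry out the greedy-coloring argument that proves the star case of Lemma~A), and then apply pigeonhole to the $t(k-1)+1$ edges of that star to extract $k$ of the same color. The disjointness observation you highlight is exactly what makes the greedy blocking edges form a genuine star, so your argument is a self-contained unpacking of the paper's one-line citation.
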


\begin{proof}
Fix $t,k\geq 1$ and let $p:=t(k-1)+2$. Suppose that $H$ is $r$-uniform with $\chi(H)\geq p$ and its edges are $t$-colored.  By Lemma A, $\chi(S_{p-1}^r,1)\leq p$, so we can find a copy of $S_{p-1}^r$ in $H$.  By the pigeonhole principle, $k$ of the edges of $S_{p-1}^r$ have the same color, and together they are a monochromatic copy of $S_k^r$. \end{proof}

How good is the estimate of Theorem \ref{star}? Notice first that for $t=1$ it is sharp.

\begin{prop} $\chi(S_k^r,1)=k+1$
\end{prop}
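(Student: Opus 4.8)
The plan is to bracket $\chi(S_k^r,1)$ between matching upper and lower bounds, both of which follow from results already in hand. The upper bound $\chi(S_k^r,1)\le k+1$ is simply the $t=1$ instance of Theorem \ref{star}, which reads $\chi(S_k^r,1)\le 1\cdot(k-1)+2=k+1$. Equivalently, one can cite Lemma A directly: since $S_k^r$ is an $r$-uniform tree with $k$ edges, every $r$-uniform hypergraph with $\chi(H)\ge k+1$ already contains a copy of $S_k^r$. So all the real content lies in proving $\chi(S_k^r,1)\ge k+1$.

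For the lower bound I would invoke Proposition \ref{lowerb}, which gives $L(S_k^r,1,r)\le\chi(S_k^r,1)$, and then evaluate $L(S_k^r,1,r)$ explicitly. Because there is only one color, $R^r(S_k^r,1)$ is nothing more than the least $n$ for which $K_n^r$ contains $S_k^r$. The star $S_k^r$ consists of a central vertex together with $k$ pairwise-disjoint blocks of $r-1$ leaves, hence exactly $k(r-1)+1$ vertices; and as soon as $n$ reaches this value the vertices of $K_n^r$ split into a center plus $k$ blocks of size $r-1$, realising the star. Thus $R^r(S_k^r,1)=k(r-1)+1$, and substituting into Proposition \ref{lowerb} gives
$$L(S_k^r,1,r)=\left\lceil\frac{k(r-1)+1-1}{r-1}\right\rceil+1=\left\lceil\frac{k(r-1)}{r-1}\right\rceil+1=k+1.$$
Combining the two bounds yields $\chi(S_k^r,1)=k+1$.

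If I wanted the lower bound fully self-contained rather than routed through the Ramsey number, I would exhibit the extremal configuration underlying Proposition \ref{lowerb} directly: the complete hypergraph $K_{k(r-1)}^r$. It has chromatic number $\lceil k(r-1)/(r-1)\rceil=k$, yet only $k(r-1)$ vertices, so it cannot contain the $(k(r-1)+1)$-vertex star $S_k^r$. This single $k$-chromatic hypergraph witnesses that $m=k$ fails the defining property, giving $\chi(S_k^r,1)\ge k+1$. The argument is short, and the only point requiring care is the vertex count of $S_k^r$: one must confirm that $k(r-1)+1$ vertices both suffice to build the star and are precisely one more than $\chi(K_{k(r-1)}^r)$ permits. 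I do not anticipate a genuine obstacle beyond this bookkeeping, since everything else reduces to a ceiling computation.
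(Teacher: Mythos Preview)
Your proposal is correct and matches the paper's approach. The paper's proof is precisely your ``self-contained'' version: it exhibits $K_{k(r-1)}^r$, notes $\chi(K_{k(r-1)}^r)=k$, and observes that $S_k^r$ has too many vertices to embed there; the upper bound is taken as already established by Theorem~\ref{star} with $t=1$. Your detour through Proposition~\ref{lowerb} and the computation $R^r(S_k^r,1)=k(r-1)+1$ is a valid alternative, but it unwinds to exactly the same construction.
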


\begin{proof} Consider the complete hypergraph $K=K_{k(r-1)}^r$.  Clearly, $\chi(K)=k$ and $S_k^r$ is not a subgraph of $K$, as its vertex set is too large.
\end{proof}
If $t=2$, Theorem \ref{star} gives $\chi(S_k^r,2)\le 2k$. For $r=2$ and odd $k$, this is a sharp estimate. For $k=1$, this is trivial; for $k\geq 3$, the complete graph $K^2_{2k-1}$ can be partitioned into 2 $(k-1)$-regular subgraphs. However, for even $k\ge 2$, $\chi(S^2_k,2)=2k-1$.

An interesting problem arises when $T=S_2^r$ with $r\ge 3$, as Theorem \ref{star} gives the relatively low upper bound 4.  Can we decrease this bound? Namely:

\begin{question}\label{qubow} Is $\chi(S_2^r,2)=3$?
\end{question}

For $r=3$ the positive answer (Corollary \ref{speccase}) comes from a more general result, Theorem \ref{main2} below. We first need a definition.

\begin{definition}
Let $H=(V(H),E(H))$ be a hypergraph. The 1-intersection graph of $H$ is denoted $\hi$, where $V(\hi)=E(H)$ and $$E(\hi)=\{(e, f) : e,f\in E(H)\text{ and } |e \cap f| =1\}.$$
\end{definition}

It is well-known that if $\hi$ is trivial, i.e., no two edges of $H$ intersect in exactly one vertex, then $H$ is $2$-colorable (\cite{LO2}, Exercise 13.33).  
Note that the stronger statement  $\chi(H)\le \chi(\hi)+1 $ follows from applying the greedy coloring algorithm in any order of the vertices of  $H$.

\begin{question}\label{qu2int}
Let $r\ge3$. Is it true that $\chi(H)\le \chi(\hi)$ for any $r$-uniform hypergraph $H$, provided $\hi$ is nontrivial?
\end{question}
Our main result is the positive answer to Question \ref{qu2int} for the $3$-uniform case.

\begin{theorem} \label{main2} If $H$ is a $3$-uniform hypergraph with $\chi(\hi)\ge 2$ then $\chi(H)\le \chi(\hi)$.
\end{theorem}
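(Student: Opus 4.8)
The plan is to imitate the way Brooks' theorem sharpens the greedy bound $\Delta+1$ to $\Delta$, here sharpening the greedy bound $\chi(\hi)+1$ to $\chi(\hi)$. Write $k=\chi(\hi)\ge 2$ and fix a proper $k$-coloring $\phi$ of $\hi$, so that any two edges of $H$ meeting in exactly one vertex get distinct values of $\phi$; we may assume $H$ is connected. First I would isolate the mechanism giving the weak bound. Color $V(H)$ greedily in an arbitrary order: a color $\gamma$ is forbidden at a vertex $v$ only when some edge $\{v,x,y\}$ already has $c(x)=c(y)=\gamma$. If $m$ distinct colors are forbidden at $v$, the witnessing pairs $\{x,y\}$ carry distinct colors and are therefore disjoint, so the witnessing edges pairwise meet exactly in $v$; hence they form an $m$-clique in $\hi$ and $m\le\omega(\hi)\le k$. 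This reproves $\chi(H)\le k+1$ and, more usefully, shows the $(k+1)$-st color can be forced at $v$ only by a $k$-clique of $\hi$ through $v$ whose ``opposite pairs'' realize all $k$ colors. Note it is exactly $3$-uniformity that makes each such opposite pair a single pair of vertices, i.e.\ a potential graph edge; this is what lets the obstruction be modeled by an ordinary graph.

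Next I would recast the coloring of $H$ as a graph list-coloring so that the list version of Brooks' theorem can be applied. For each edge $\{a,b,c\}$ it suffices that two of its vertices differ, so I would choose a representative pair from every edge, forming a graph $G$ on $V(H)$ whose proper colorings are precisely the colorings of $H$ with no monochromatic edge. The choice must be made compatibly with $\phi$: within one $\phi$-class any two edges meet in $0$ or $2$ vertices, a rigid structure that I would use both to share representatives (all edges containing a fixed pair can use that pair) and to translate the clique bound above into a bound on the degrees that matter in $G$. Assigning every vertex the list $\{1,\dots,k\}$, the Erd\H os--Rubin--Taylor/Vizing list form of Brooks' theorem then produces a proper $k$-coloring of each component of $G$, and hence of $H$, unless some component is a Gallai tree, i.e.\ one whose blocks are all complete graphs or odd cycles.

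The main obstacle, exactly as in Brooks' theorem, is this extremal Gallai-tree case, and here two things must be controlled at once. I must show that a block forcing $k+1$ colors --- a copy of $K_{k+1}$, or an odd cycle when $k=2$ --- cannot actually arise from a valid $\phi$-structure, or else recolor around it directly; this is where I expect the real work to lie, and where the third vertex of each $3$-edge (absent in ordinary graph coloring) should supply the slack needed to release a color. Simultaneously I must respect the hypothesis $\chi(\hi)\ge 2$: whenever an inductive deletion drops $\chi(\hi)$ to $1$, the hypergraph becomes $2$-colorable by the known fact that a trivial $\hi$ forces $2$-colorability, and $2\le k$ keeps the argument alive. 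I expect the delicate point to be making the representative-pair selection and the Gallai-tree analysis cohere, since a careless choice of representatives could manufacture a spurious bad block; controlling this interaction, and confirming that $3$-uniformity rather than general $r$ is what the argument needs (cf.\ Question~\ref{qu2int}), is the crux.
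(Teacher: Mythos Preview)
Your plan is exactly the paper's: build a skeleton graph $G$ on $V(H)$ by picking one representative pair from each triple so that $G$ decomposes as a union of matchings $M_1,\dots,M_t$ (one per $\phi$-class), and then invoke (list) Brooks on $G$. You have correctly located the crux but not supplied the mechanism that resolves it, and two concrete ingredients are missing.

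First, the structure of a $\phi$-class is sharper than mere $0/2$-intersection: every nontrivial component is either a $B_m$ (all triples through one fixed pair, the \emph{base}) or three or four triples on four vertices (a $K$-component). This is what lets each $M_i$ be chosen as a genuine matching (bases of the $B$-components plus two disjoint pairs from each $K$-component), giving $\Delta(G)\le t$. Second, the Gallai-tree obstruction splits by parity. For even $t$ it vanishes outright: $K_{t+1}$ has no $1$-factorization, and when $t=2$ an odd cycle is not the union of two matchings, so ordinary Brooks already gives $\chi(G)\le t$. For odd $t\ge 3$ the paper first adjusts the $K$-component pair choices so that every $K_{t+1}$ component of $G$ contains an $M_1$-edge $(x_i,y_i)$ that is the base of a $B$-component of $H_1$; one then colors $x_i,y_i$ both with $1$, the remaining $t-1$ vertices of that clique with $2,\dots,t$, and runs list Brooks on the rest of $G$ with list $\{2,\dots,t\}$ at each third vertex $z$ of a triple $\{x_i,y_i,z\}$ and the full list elsewhere. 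Your anticipated ``slack from the third vertex'' is precisely that such $z$ cannot be incident to $M_1$ (else a $1$-intersection inside $H_1$), so $d_G(z)\le t-1$ matches the shortened list; the endblock analysis of a putative bad Gallai tree then closes because every endblock must carry an $M_1$-edge, forcing an endpoint into the set of such $z$, a contradiction. Without these two pieces your outline does not yet constitute a proof.
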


\begin{cor}\label{speccase} For $t\ge 1$, $\chi(S_2^3,t)\le t+1$.
\end{cor}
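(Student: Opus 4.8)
The plan is to obtain the corollary as a short deduction from Theorem~\ref{main2}, arguing by contraposition. Fix $t$, let $H$ be a $3$-uniform hypergraph with $\chi(H)\ge t+1$, and equip $H$ with an arbitrary $t$-edge-coloring. Suppose, toward a contradiction, that $H$ contains no monochromatic copy of $S_2^3$. The goal is to show that this forces $\chi(H)\le t$, contradicting $\chi(H)\ge t+1$.

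The key step is to reinterpret the monochromatic-free hypothesis as a coloring condition on $\hi$. A copy of $S_2^3$ is precisely a pair of hyperedges meeting in exactly one vertex, i.e.\ an edge of $\hi$. Hence ``no monochromatic $S_2^3$'' says exactly that, for every edge $(e,f)$ of $\hi$, the hyperedges $e$ and $f$ receive distinct colors. Since $V(\hi)=E(H)$, the given $t$-edge-coloring of $H$ is literally a map $V(\hi)\to\{1,\dots,t\}$, and the monochromatic-free condition is precisely the statement that this map is a proper vertex-coloring of $\hi$. Therefore $\chi(\hi)\le t$. If $\chi(\hi)\ge 2$, then Theorem~\ref{main2} applies and yields $\chi(H)\le\chi(\hi)\le t$, the desired contradiction.

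It remains to address the degenerate case $\chi(\hi)\le 1$, that is, $\hi$ edgeless. Here Theorem~\ref{main2} does not apply, but the quoted fact that a trivial $\hi$ forces $H$ to be $2$-colorable gives $\chi(H)\le 2$; for $t\ge 2$ this already contradicts $\chi(H)\ge t+1\ge 3$. I expect this degenerate case to be the only real subtlety, and it is exactly where $t\ge 2$ is used: for $t=1$ the hypergraph $K_4^3$ is $2$-chromatic with edgeless $\hi$ and no copy of $S_2^3$ (any two of its triples meet in two vertices), so in fact $\chi(S_2^3,1)=3$, consistent with the proposition above. Beyond this case check no genuine obstacle remains, since all of the substantive work is carried by Theorem~\ref{main2}; the corollary is essentially a dictionary translation between the monochromatic-$S_2^3$-free condition on the edge-coloring and proper vertex-colorings of the $1$-intersection graph $\hi$.
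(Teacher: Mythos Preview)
Your argument is correct and is exactly the intended deduction: the paper states the corollary without proof, as an immediate consequence of Theorem~\ref{main2}, and the dictionary you spell out (``no monochromatic $S_2^3$'' $\Leftrightarrow$ the $t$-edge-coloring is a proper vertex-coloring of $\hi$) is the whole content. The degenerate case $\chi(\hi)\le 1$ is handled, as you say, by the cited fact that a $3$-uniform hypergraph with trivial $\hi$ is $2$-colorable.

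You are also right to flag the boundary case. For $t=1$ the bound $\chi(S_2^3,1)\le 2$ is false, since $K_4^3$ is $2$-chromatic with every pair of triples meeting in two vertices, hence containing no $S_2^3$; this matches the paper's own Proposition giving $\chi(S_k^r,1)=k+1$, so $\chi(S_2^3,1)=3$. Thus the range in the corollary should read $t\ge 2$ (or the bound should be stated as $\chi(S_2^3,t)\le\max\{t+1,3\}$); the paper's discussion of sharpness only treats $t=2,3$ anyway.
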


The case $t=2$ of Corollary \ref{speccase} was the initial aim of the research in this paper and it was proved first by Zolt\'an F\"uredi \cite{FU}.  Our proof of Theorem \ref{main2} uses his observation (Lemma \ref{structure} below) and the list-coloring version of Brooks' theorem. Corollary \ref{speccase} is obviously sharp for $t=2$; it follows from Proposition \ref{lowerb} that it is also sharp for $t=3$, because $R^3(S_2^3,3)=6$ (\cite{AGYLM}).
 It would be interesting to see whether Corollary \ref{speccase} is true for any $S_2^r$ (in particular for $r=4,t=2$) as this is equivalent to the statement that $r$-uniform hypergraphs with bipartite 1-intersection graphs are 2-colorable.

\section{Proof of Theorem \ref{main2}}
In this section, we use the phrase ``triple system'' for a $3$-uniform hypergraph.  The word ``triple'' will take the place of ``edge'' so that ``edge'' may be reserved for graphs.  Our goal is to construct a proper $t$-coloring of $H$ from a proper $t$-coloring of $\hi$.  Note that a partition of $E(H)$ into classes $E_1, E_2,\dots, E_{t}$ such that for any $i,\;1\leq i\leq t,$ no two edges of $E_i$ $1$-intersect is precisely a proper $t$-coloring of $\hi$.  Let $B_k$ denote the triple system with $k$ edges intersecting pairwise in the vertices $\{v, w\}$, called the {\em base} of $B_k$. A $B$-component (also, $B_k$-component) is a triple system which is isomorphic to $B_k$ for some $k\ge 1$.  A $K$-component is either three or four distinct triples on four vertices. A triple system is connected if for every partition of its vertices into two nonempty parts, there is a triple intersecting both parts. Every triple system can be uniquely decomposed into pairwise disjoint connected parts, called components. Components with one vertex are called trivial components.

\begin{lemma} \label{structure} Let $C$ be a nontrivial component in a triple system without $1$-intersections. Then $C$ is either a $B$-component or a $K$-component.
\end{lemma}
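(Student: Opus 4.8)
The plan is to exploit the defining restriction relentlessly: since $C$ has no $1$-intersections, any two distinct triples of $C$ meet in either $0$ or $2$ vertices. First I would dispose of the trivial case: a single-triple component is $B_1$, so assume $C$ has at least two triples. Because $C$ is connected, its edges cannot be pairwise disjoint (otherwise the vertex set of one triple and the rest would give a partition with no crossing triple, contradicting connectivity), so some two triples meet, necessarily in exactly two vertices; write them as $e=\{a,b,c\}$ and $f=\{a,b,d\}$ with common pair $\{a,b\}$ and $|e\cup f|=4$. I will call a triple a \emph{book page} if it contains $\{a,b\}$, and aim for the dichotomy: either every triple of $C$ is a book page, in which case $C$ is a $B$-component with base $\{a,b\}$, or $C$ lives on the four vertices $\{a,b,c,d\}$ and is a $K$-component.

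Second, suppose not every triple is a book page. Walking along a shortest path in the ``$2$-intersection'' adjacency from $e$ to some non-page triple, the first departure from the set of pages yields a book page $h=\{a,b,x\}$ together with a non-page triple $g$ satisfying $|h\cap g|=2$. Since $g$ does not contain $\{a,b\}$, the shared pair with $h$ must involve $x$, so up to swapping the roles of $a$ and $b$ we may write $g=\{a,x,z\}$ with $z\neq b$. Now $g$ contains $a$, hence meets each of $e$ and $f$ in at least one vertex, and the no-$1$-intersection rule upgrades both intersections to size $2$; solving the constraint ``$x=c$ or $z=c$'' (from $e$) against ``$x=d$ or $z=d$'' (from $f$) forces $g=\{a,c,d\}$. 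Thus $e,f,g$ are three distinct triples on $W:=\{a,b,c,d\}$.

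Finally I would run an absorption argument to confine $C$ to $W$. The numerical heart of it is that the four triples of $W$ are indexed by their missing vertex, and any pair $\{p,q\}\subseteq W$ must contain the missing vertex of at least one of any three chosen triples, simply because $W\setminus\{p,q\}$ has a single element. Consequently a triple $\{p,q,z\}$ with $z\notin W$ would meet that triple of $W$ in exactly one vertex, a forbidden $1$-intersection. So if some edge of $C$ left $W$, a first-departure argument along a path from a $W$-triple would produce precisely such a triple $\{p,q,z\}$ adjacent to a $W$-triple, a contradiction; hence every triple of $C$ is a $3$-subset of $W$. Since three distinct $3$-subsets already cover all four vertices, $C$ consists of three or four triples on four vertices, i.e.\ a $K$-component. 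The main obstacle I anticipate is the bookkeeping in the second step---locating a non-page triple adjacent to a page and verifying that the two membership constraints from $e$ and $f$ leave only $\{a,c,d\}$---together with getting the counting identity in the absorption step exactly right; once these are nailed down, the split into the cases (one triple, book, four vertices) is routine.
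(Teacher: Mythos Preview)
Your argument is correct in outline, though it is organized quite differently from the paper's. The paper splits on $|V(C)|$: if $|V(C)|\le 4$ a brief inspection gives $B_1$, $B_2$, or a $K$-component; if $|V(C)|\ge 5$ it chooses a \emph{maximal} family $e_1,\dots,e_m$ of triples through a common pair $\{x,y\}$ (so $m\ge 2$), shows that $A=\bigcup e_i$ must already equal $V(C)$ (any triple reaching outside $A$ is forced by the no-$1$-intersection rule to contain $\{x,y\}$, contradicting maximality), and then notes that any further triple would $1$-intersect some $e_i$, so $C=B_m$. Your route instead fixes an arbitrary two-triple book on $\{a,b\}$ and branches on whether every triple is a page: if not, you walk a path in the $2$-intersection graph to pin the first non-page down as $\{a,c,d\}$ or $\{b,c,d\}$, and then absorb all of $C$ into $\{a,b,c,d\}$. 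The paper's maximality trick is shorter; your version makes the emergence of the $K$-component more explicit.

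Two small points to tighten. First, your path arguments tacitly use that the $2$-intersection graph on the triples of $C$ is connected; this is true (two components of that graph would have vertex-disjoint unions, since distinct triples meet in $0$ or $2$ points, contradicting the hypergraph connectivity of $C$), but it should be said. Second, in the absorption step you write that $W\setminus\{p,q\}$ ``has a single element''; it actually has two. Your conclusion still holds by pigeonhole: the three distinct missing vertices of $e,f,g$ cannot all lie in the two-element set $W\setminus\{p,q\}$, so at least one lies in $\{p,q\}$, and the triple $\{p,q,z\}$ then $1$-intersects the corresponding $W$-triple.
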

\begin{proof}
If $C$ has at most four vertices then $1\leq |E(C)|\leq 4$ (where $E(C)$ is here considered as a set, not a multiset) and by inspection, $C$ is either $B_1,B_2$, or a $K$-component. Assume $C$ has at least five vertices and select the maximum $m$ such that $e_1,e_2,\dots e_m \in E(C)$ are distinct triples intersecting in a two-element set, say in $\{x,y\}$. Clearly, $m\ge 2$.  Then $A=\cup_{i=1}^m e_i$ must cover all vertices of $C$, as otherwise there is an uncovered vertex $z$ and  a triple $f$ containing $z$ and intersecting $A$, since $C$ is a component. However, from $m\ge 2$ and the intersection condition, $f\cap A=\{x,y\}$ follows, contradicting the choice of $m$. Thus $A=V(C)$ and from $|V(C)|\ge 5$ we have $m\ge 3$. It is obvious that any triple of $C$ different from the
$e_i$'s would intersect some $e_i$ in one vertex, violating the intersection condition. Thus $C$ is isomorphic to $B_m$, concluding the proof.
\end{proof}


A multigraph $G$ is called a {\em skeleton} of a triple system $H$ if every triple contains at least one edge of $G$. We may assume that $V(H)=V(G)$.  A {\em matching} in a multigraph is a set of pairwise disjoint edges. A {\em factorized complete graph } is a complete graph on $2m$ vertices whose edge set is partitioned into $2m-1$ matchings. The following lemma allows us to define a special skeleton of triple systems.

\begin{lemma}\label{goodcol}
Suppose that $H$ is a triple system with $\chi(H^{[1]})=t\ge 2$ and let $H_1,H_2,\dots,H_t$ be a partition of $H$ into triple systems without $1$-intersections.  There exists a skeleton $G$ of $H$ with the following properties.
\begin{enumerate}

\item $E(G)=\cup_{i=1}^t M_i$ where each $M_i$ is a matching and a skeleton of $H_i$.

\item For $1\le i \le t$, edges of $M_i$ are the bases of all $B$-components of $H_i$ and two disjoint vertex pairs from all $K$-components of $H_i$.

\item If $K^*=K_{t+1}\subset G$ then $K^*$ is a connected component of $G$ factorized by the $M_i$'s and there is $e\in M_1\cap E(K^*)$ such that $e$ is from a $B$-component of $H_1$.

\end{enumerate}
\end{lemma}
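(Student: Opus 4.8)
The plan is to build each matching $M_i$ locally from the component structure of $H_i$ furnished by Lemma~\ref{structure}, and then to exploit the freedom in these local choices to control the copies of $K_{t+1}$ in $G$. First I would define $M_i$ component by component. Since $H_i$ has no $1$-intersections, every nontrivial component is a $B$-component or a $K$-component. For a $B$-component I put its base $\{v,w\}$ into $M_i$; as every triple contains $\{v,w\}$, this one edge is a skeleton of that component. For a $K$-component on four vertices I put two disjoint pairs into $M_i$; one checks directly that each of the three partitions of the four vertices into two pairs is a skeleton of its three or four triples, so there is genuine freedom here (and likewise for a single triple $B_1$). Distinct components are vertex-disjoint and the chosen pairs within a component are disjoint, so $M_i$ is a matching; since the chosen pairs cover every triple, $M_i$ is a skeleton of $H_i$. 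Setting $G=\bigcup_{i=1}^t M_i$ then yields Properties 1 and 2 for \emph{any} such choice.

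Next I would dispatch the first half of Property 3 by a degree count. Each $M_i$ is a matching, so $\Delta(G)\le t$. If $K^*=K_{t+1}\subseteq G$, then every vertex of $K^*$ already has degree $t$ inside $K^*$, hence no edge leaving it, so $K^*$ is a connected component of $G$. Each such vertex meets every $M_i$ at most once but has total degree exactly $t$, so it meets each $M_i$ exactly once; thus each $M_i\cap E(K^*)$ is a perfect matching of $K^*$, the $M_i$ factorize $K^*$ (forcing $t+1$ even), and within $V(K^*)$ the graph $G$ is simple: every pair of $V(K^*)$ is the chosen pair of exactly one component.

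The substantive part is producing a $B$-base inside $K^*$, and here I would argue extremally. Call a contributing component \emph{type $4$} if all four of its vertices lie in $V(K^*)$ (it then contributes both its pairs) and \emph{type $2$} if exactly two do (it contributes one pair, its other lying outside); by the no-edge-leaving property these are the only options. Suppose every edge of $K^*$ came from a $K$-component. Because each pair of $V(K^*)$ is covered exactly once, re-choosing the pairing of any contributing $K$-component to one of its other two valid pairings uncovers the pair(s) it had placed in $K^*$, and by simplicity nothing else covers them, so those pairs leave $E(G)$ and the copy $K^*$ is destroyed. I would therefore minimize the number of distinct edges of $G$, with the number of $B$-base-free copies of $K_{t+1}$ as a tie-breaker, and use such a switch to contradict minimality, forcing some edge of $K^*$ to be a $B$-base; relabelling its colour as $1$ gives the statement. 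Re-choosing a type-$4$ component works cleanly: it uncovers two pairs of $K^*$ while its two new pairs already lie in the complete graph $K^*$, so the distinct-edge count strictly drops by two.

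The hard part will be the side effects of a type-$2$ switch. Such a switch uncovers its one edge of $K^*$, but its two new pairs join $V(K^*)$ to outside vertices, so the distinct-edge count is unchanged and a new copy of $K_{t+1}$ might be completed through those outside vertices. I expect this to be the main obstacle, and I would resolve it with the two available alternative pairings together with the tie-breaking potential: completing a new complete subgraph through a given outside vertex forces that vertex to be nearly saturated, and at most one of the two alternatives can complete it, so the surviving alternative strictly lowers the lexicographic potential. The optimality of the given colouring, $\chi(\hi)=t$, is available as a further structural constraint should the bookkeeping in this case require it.
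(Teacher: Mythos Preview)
Your setup for properties 1, 2, and the first half of property 3 is correct and matches the paper. The gap is in the second half of property 3.

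First, the relabelling step is a genuine error. The conclusion requires that \emph{every} $K_{t+1}$-component of $G$ contain an $M_1$-edge that is the base of a $B$-component of $H_1$. Your extremal argument (if it succeeded) would only guarantee that each $K^*$ has a $B$-base in \emph{some} $M_i$; different copies of $K_{t+1}$ may have their $B$-bases in different colour classes, and no single global permutation of $1,\ldots,t$ can place them all in $M_1$. So even granting the type-2 analysis, the proof would not close.

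Second, the paper avoids both this relabelling issue and your type-2 difficulty by a much simpler device: it switches only within $M_1$. Call a $K_{t+1}$-component $U$ \emph{bad} if no edge of $M_1\cap E(U)$ is the base of a $B$-component of $H_1$, and choose $G$ (among all choices satisfying 1 and 2) to minimise the number of bad components. If $U$ is bad, pick any $(x,y)\in M_1\cap E(U)$; by badness it comes from a $K$-component of $H_1$ on $\{x,y,u,v\}$ with the partner pair $(u,v)\in M_1$. Replace $(x,y),(u,v)$ by $(x,u),(y,v)$ in $M_1$. Since $(x,y)$ lay only in $M_1$ (your simplicity observation), it leaves $G$ and $U$ ceases to be $K_{t+1}$. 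If $u,v\in V(U)$ the component sits on the same vertex set; otherwise $U$ merges with the component(s) of $u,v$ into a component on more than $t+1$ vertices. Either way no new $K_{t+1}$-component arises (any $K_{t+1}$ subgraph is a full component by the degree bound, and the only vertices whose incidences changed are $x,y,u,v$, all now in one non-$K_{t+1}$ component), so the number of bad components strictly drops---a contradiction.

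The moral: by committing to colour $1$ from the start and minimising the right quantity (bad components rather than distinct edges), one never has to analyse how a switch interacts with the rest of $G$, and the type-2/type-4 dichotomy disappears.
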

\begin{proof} From Lemma \ref{structure} we can define $M_i$ by selecting the base edges from every $B$-component of $H_i$ and selecting two disjoint pairs from every $K$-component of $H_i$. The resulting multigraph is clearly a skeleton of $H$ and satisfies properties 1 and 2. We will select the disjoint pairs from the $K$-components so that property 3 also holds. Notice that $K^*=K_{t+1}\subset G$ must form a connected component in $G$ because it is a $t$-regular subgraph of a graph of maximum degree $t$. Also, $K_{t+1}$ is factorized by the $M_i$'s because the union of $t$ matchings can cover at most ${t(t+1)\over 2}={t+1\choose 2}$ edges of $K_{t+1}$, therefore every edge of $K_{t+1}$ must be covered exactly once by the $M_i$'s.
Thus we have to ensure only that there is $e\in M_1\cap E(K^*)$ with $e$ from a $B$-component of $H_1$. For convenience, we say that a $K^*=K_{t+1}$ is a {\em bad component} if such $e$ does not exist.
 
Select a skeleton $S$ as described in the previous paragraph such that $p$, the number of bad components, is as small as possible.  Suppose that $(x,y)\in M_1$ is in a bad component $U$. In other words, $(x,y)$ is in a $K$-component of $H_1$, where $V(K)=\{x,y,u,v\}$ and $(u,v)\in M_1$.  Now we replace these two pairs by the pairs $(x,u),(y,v)$ to form a new $M_1$. After this switch, $U$ is no longer a bad component.  In fact, either $U$ becomes a new component on the same vertex set (if $(u,v)$ was in $U$) or $U$ melds with another component into a new component. In both cases, no new bad components are created and in the new skeleton there are fewer than $p$ bad components. This contradiction shows that $p=0$ and proves the lemma.
\end{proof}

\begin{proof}[Proof of Theorem \ref{main2}]

Let $H$ be a triple system with $t:=\chi(\hi)\ge 2$ and partition $H$ into $H_1,\dots,H_t$ so that each $H_i$ is without $1$-intersections. Let $G$ be a skeleton of $H$ with the properties ensured by Lemma \ref{goodcol}.

Let $G'$ be a connected component  of $G$. By Brooks' Theorem, if $G'$ is not the complete graph $K_{t+1}$ or an odd cycle (if $t=2$), $\chi(G')\leq \Delta(G')\leq t$.

Suppose first that $t$ is even. Now $G'\ne K_{t+1}$ because that would contradict property 3 in Lemma \ref{goodcol}: $K_{t+1}$ cannot be factorized into matchings.  Also, for $t=2$, $G'$ cannot be an odd cycle since odd cycles are not the union of two matchings. Thus every connected component of $G$ is at most $t$-chromatic, therefore $\chi(G)\le t$. Since $G$ is a skeleton of $H$, this implies $\chi(H)\le t$, concluding the proof for the case when $t$ is even.

Suppose that $t$ is odd, $t\ge 3$. In this case the previous argument does not work when some connected component $G'=K_{t+1}\subset G$.  However, from Lemma \ref{goodcol}, every $K_{t+1}$-component $C_i$ of $G$ has an edge $(x_i,y_i)\in M_1$ that is the base of a $B$-component in $H_1$. Define the vertex coloring $c$ on $X=\cup_{i=1}^m V(C_i)$ by $c(x_i)=c(y_i)=1$ and by coloring all the other vertices of all $C_i$'s with $2,\dots,t$.

Let $F$ be the subgraph of $G$ spanned by $V(G)\setminus X$ and define

\[
Z:=\{z\in V(F): \{x_i,y_i,z\}\in E(H_1) \text{ for some } 1\le i\le m\}.
\]

Since for every $z\in Z$ there is a triple $T=(x_i,y_i,z)\in H_1$ in a $B$-component of $H_1$ (with base $(x_i,y_i)$),  $(z,u)\in M_1$ is impossible for any $u\in V(G)$, since otherwise $T$ and the triple of $H_1$ containing $(z,u)$ would $1$-intersect in $z$. Thus $d_G(v)\le t-1$ for $z\in Z$. Also, $d_G(v)\le t$ for all $v\in V(F)\setminus Z$.

We claim that with lists $L(z):=\{2,\dots,t\}$ for $z\in Z$ and $L(v):=\{1,\dots,t\}$ for $v\in V(F)\setminus Z$, $F$ is $L$-choosable.  We use the reduction argument present in many coloring proofs (see, for example, the very recent survey paper \cite{CR}).

Suppose $F$ is not $L$-choosable and let $F^\prime$ be a minimal induced subgraph of $F$ which fails to be $L$-choosable.  We may assume that any $z\in V(F^\prime)\cap Z$ has $d_{F^\prime}(z)=t-1$ (otherwise we may $L$-choose $F^\prime-z$, add $z$ back and properly color it).  Likewise we may assume $d_{F^\prime}(v)=t$ for all $v\in V(F^\prime)\setminus Z$.  By the degree-choosability version of Brooks' theorem (see \cite{KSW}, Lemma 1 or  \cite{CR}, Theorem 11), $F^\prime$ is a Gallai tree: a graph whose blocks are complete graphs or odd cycles.

Let $A$ be a block of $F^\prime$.  Then $A\ne K_{t+1}$ because all $K_{t+1}$-components of $G$ are in $X$. Since all vertex degrees in $F'$ are $t$ or $t-1$, $A$ is either an odd cycle (if $t=3$) or $A$ is a $K_t$. $A$ must contain an edge $e\in M_1$. Otherwise $M_2,\dots,M_t$ would cover the edges of $A$, a contradiction in either case. If A is an endblock then by the degree requirements, either $$V(A)\cap (V(F)\setminus Z)=\{w\}$$ where $w$ is the unique cut point of $A$ or $V(A)\subset Z$. In both cases an endpoint of $e$ must be in $Z$. Then there exists some triple $\{x_i,y_i,z\}\in H_1$ which $1$-intersects with the triple of $H_1$ containing $e$, a contradiction, proving that $F$ is $L$-choosable.

Let $c^\prime:V(F)\to\{1,\dots,t\}$ be an $L$-coloring of $F$.  We extend $c$ from $X$ to $V(H)$ by setting $c(v):=c^\prime(v)$ for all $v\in V(F)$.  Observe that $c$ properly colors all edges of $G$ except for the edges of the form $(x_i,y_i)$ which are monochromatic in color $1$.
Since $G$ is a skeleton, every triple of $H$ is properly colored except possibly the triples in the from $(x_i,y_i,x)$.

We claim that $c(x)\ne 1$.
Suppose to the contrary that $c(x)=1$. If $x\in X$ then  $x\in \{x_j,y_j\}$ for some $j\ne i$, but this is impossible because the bases $(x_i,y_i), (x_j,y_j)$ are from different $B$-components of $H_1$. If $x\notin X$ then $x\in Z$ from the definition of $Z$. However, $1\notin L(x)$ for $x\in Z$ and this proves the claim.

Therefore $c$ is a proper $t$-coloring of $H$ and this completes the proof.
\end{proof}

\eject

\end{document}